 \newtheorem{theorem}{Theorem}[section]
 \newtheorem{corollary}[theorem]{Corollary}
 \newtheorem{lemma}[theorem]{Lemma}
 \numberwithin{equation}{section}
\newcommand{\cB}{\mathcal{B}}
\newcommand{\cP}{\mathcal{P}}
\newcommand{\C}{\mathbb{C}}
\newcommand{\N}{\mathbb{N}}
\newcommand{\T}{\mathbb{T}}
\newcommand{\R}{\mathbb{R}}
\newcommand{\Z}{\mathbb{Z}}
\begin{document}
\title[{More on the Density of Analytic Polynomials}]
{More on the Density of Analytic\\ 
Polynomials in Abstract Hardy Spaces}
\author{Alexei Karlovich}
\address{%
Centro de Matem\'atica e Aplica\c{c}\~oes,\\
Departamento de Matem\'a\-tica, \\
Faculdade de Ci\^encias e Tecnologia,\\
Universidade Nova de Lisboa,\\
Quinta da Torre, \\
2829--516 Caparica, Portugal}
\email{oyk@fct.unl.pt}
\author{Eugene Shargorodsky}
\address{%
Department of Mathematics\\
King's College London\\
Strand, London WC2R 2LS\\
United Kingdom}
\email{eugene.shargorodsky@kcl.ac.uk}
\thanks{%
This work was partially supported by the Funda\c{c}\~ao para a Ci\^encia e a
Tecnologia (Portu\-guese Foundation for Science and Technology)
through the project
UID/MAT/00297/2013 (Centro de Matem\'atica e Aplica\c{c}\~oes).}
\begin{abstract}
Let $\{F_n\}$ be the sequence of the Fej\'er kernels on the unit circle $\T$.
The first author {recently proved} that if $X$ is a separable Banach 
function space on $\T$ such that the Hardy-Littlewood maximal
operator $M$ is bounded on its associate space $X'$, then $\|f*F_n-f\|_X\to 0$
for every $f\in X$ as $n\to\infty$. This implies that the set of analytic 
polynomials $\cP_A$ is dense in the abstract Hardy space $H[X]$ built upon 
a separable Banach function space $X$ such that $M$ is bounded on $X'$.
In this note we show that there exists a separable weighted $L^1$ space $X$ 
such that the sequence $f*F_n$ does not 
always converge to $f\in X$ in the norm of $X$. On the other hand,
we prove that the set $\cP_A$ is dense in $H[X]$ under the assumption that 
$X$ is merely separable. 
\end{abstract}

\keywords{Banach function space, abstract Hardy space, analytic polynomial, 
Fej\'er kernel}

\subjclass{{Primary 46E30, Secondary 42A10}}
\maketitle
\section{Preliminaries and the main results}
For $0< p\le\infty$, let $L^p:=L^p(\T)$ be the Lebesgue space on the unit
circle $\T:=\{z\in\C:|z|=1\}$ in the complex plane $\C$. For $f\in L^1$, let
\[
\widehat{f}(n):=\frac{1}{2\pi}
\int_{-\pi}^\pi f(e^{i\theta})e^{-in\theta}\,d\theta,
\quad n\in\Z,
\]
be the sequence of the Fourier coefficients of $f$.
Let $X$ be a Banach space continuously embedded in $L^1$.
Following \cite[p.~877]{Xu92}, we will consider the abstract Hardy space 
$H[X]$ built upon the space $X$, which is defined by
\[
H[X]:=\big\{f\in X:\ \widehat{f}(n)=0\quad\mbox{for all}\quad n<0\big\}.
\]
It is clear that if $1\le p\le\infty$, then $H[L^p]$ is the classical Hardy
space $H^p$.

A function of the form
\[
q(t)=\sum_{k=0}^n\alpha_k t^k,
\quad
t\in\T,
\quad
\alpha_0,\dots,\alpha_n\in\C,
\]
is said to be an analytic polynomial on $\T$. The set of all analytic
polynomials is denoted by $\cP_A$. It is well known {that} the set
$\cP_A$ is dense in $H^p$ whenever $1\le p<\infty$
(see, e.g., \cite[Chap.~III, Corollary~1.7(a)]{C91}).
The density of the set $\cP_A$ in the abstract Hardy spaces $H[X]$
was studied by the first author \cite{K-CM} for the case when
$X$ is a so-called Banach function space. 

Let us recall the definition of a Banach function space.
We equip $\T$ with the normalized Lebesgue measure $dm(t)=|dt|/(2\pi)$.
Let $L^0$ be the space of all measurable complex-valued functions on $\T$. 
As usual, we do not distinguish {functions which} are equal almost everywhere
(for the latter we use the standard abbreviation a.e.).  Let $L^0_+$ be the
subset of functions in $L^0$ whose values lie in $[0,\infty]$. The
characteristic function of a measurable set $E\subset\T$ is denoted by 
$\chi_E$.

Following \cite[Chap.~1, Definition~1.1]{BS88}, a mapping 
$\rho: L_+^0\to [0,\infty]$ is called a Banach function norm
if, for all functions $f,g, f_n\in L_+^0$ with $n\in\N$, for all
constants $a\ge 0$, and for all measurable subsets $E$ of $\T$, the
following  properties hold:
\begin{eqnarray*}
{\rm (A1)} & &
\rho(f)=0  \Leftrightarrow  f=0\ \mbox{a.e.},
\
\rho(af)=a\rho(f),
\
\rho(f+g) \le \rho(f)+\rho(g),\\
{\rm (A2)} & &0\le g \le f \ \mbox{a.e.} \ \Rightarrow \ 
\rho(g) \le \rho(f)
\quad\mbox{(the lattice property)},\\
{\rm (A3)} & &0\le f_n \uparrow f \ \mbox{a.e.} \ \Rightarrow \
       \rho(f_n) \uparrow \rho(f)\quad\mbox{(the Fatou property)},\\
{\rm (A4)} & & m(E)<\infty\ \Rightarrow\ \rho(\chi_E) <\infty,\\
{\rm (A5)} & &\int_E f(t)\,dm(t) \le C_E\rho(f)
\end{eqnarray*}
with {a constant} $C_E \in (0,\infty)$ that may depend on $E$ and 
$\rho$,  but is independent of $f$. When functions differing only on 
a set of measure  zero are identified, the set $X$ of all functions 
$f\in L^0$ for  which  $\rho(|f|)<\infty$ is called a Banach function
space. For each $f\in X$, the norm of $f$ is defined by
$\|f\|_X :=\rho(|f|)$.
The set $X$ under the natural linear space operations and under 
this norm becomes a Banach space (see 
\cite[Chap.~1, Theorems~1.4 and~1.6]{BS88}). 
If $\rho$ is a Banach function norm, its associate norm 
$\rho'$ is defined on $L_+^0$ by
\[
\rho'(g):=\sup\left\{
\int_\T f(t)g(t)\,dm(t) \ : \ 
f\in L_+^0, \ \rho(f) \le 1
\right\}, \ g\in L_+^0.
\]
It is a Banach function norm itself \cite[Chap.~1, Theorem~2.2]{BS88}.
The Banach function space $X'$ determined by the Banach function norm
$\rho'$ is called the associate space (K\"othe dual) of $X$. 
The associate space $X'$ can be viewed as a subspace of the (Banach) 
dual space $X^*$. 

Recall that $L^1$ is a commutative Banach algebra under the convolution
multiplication defined for $f,g\in L^1$ by
\[
(f* g)(e^{i\theta})=\frac{1}{2\pi}\int_{-\pi}^\pi 
f(e^{i\theta-i\varphi})g(e^{i\varphi})\,d\varphi,
\quad
e^{i\theta}\in\T.
\]
For $n\in\N$, let
\[
F_n(e^{i\theta}):=\sum_{k=-n}^n\left(1-\frac{|k|}{n+1}\right)e^{i\theta k}
=
\frac{1}{n+1}\left(
\frac{\sin\frac{n+1}{2}\theta}{\sin\frac{\theta}{2}}
\right)^2,
\quad e^{i\theta}\in\T,
\]
be the $n$-th Fej\'er kernel. For $f\in L^1$, the $n$-th Fej\'er mean of $f$ 
is defined as the convolution $f*F_n$. 

Given $f\in L^1$, the Hardy-Littlewood maximal function is defined by
\[
(Mf)(t):=\sup_{I\ni t}\frac{1}{m(I)}\int_I|f(\tau)|\,dm(\tau),
\quad t\in\T,
\]
where the supremum is taken over all arcs  $I\subset\T$ containing $t\in\T$.
The operator $f\mapsto Mf$ is called the Hardy-Littlewood maximal operator.
\begin{theorem}[{\cite[Theorem~3.3]{K-CM}}]
\label{th:Fejer-norm-convergence}
Suppose $X$ is a separable Banach function space on $\T$. If the 
Hardy-Littlewood maximal operator is bounded on the associate space $X'$, 
then for every $f\in X$,
\begin{equation}\label{eq:Fejer-norm-convergence}
\lim_{n\to\infty}\|f*F_n-f\|_X=0.
\end{equation}
\end{theorem}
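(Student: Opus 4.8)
The plan is to deduce \eqref{eq:Fejer-norm-convergence} from a uniform norm bound for the Fej\'er means on $X$, combined with the classical Fej\'er theorem on $C(\T)$ and a density argument.

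The key step is to show that there is a constant $C>0$, depending only on $X$ and on the norm of $M$ acting on $X'$, such that $\|f*F_n\|_X\le C\|f\|_X$ for all $n\in\N$ and all $f\in X$. Since $F_n\ge0$, one has $|f*F_n|\le|f|*F_n$ a.e., so by the lattice property (A2) and the Lorentz--Luxemburg duality $\rho''=\rho$ (see \cite{BS88}),
\begin{equation*}
\|f*F_n\|_X\le\rho(|f|*F_n)=\sup\left\{\int_\T(|f|*F_n)\,g\,dm:\ g\in L^0_+,\ \rho'(g)\le1\right\}.
\end{equation*}
For a fixed admissible $g$, Tonelli's theorem together with the fact that $F_n$ is an even function transfers the convolution onto $g$, giving $\int_\T(|f|*F_n)\,g\,dm=\int_\T|f|\,(g*F_n)\,dm$. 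Now I would invoke the classical pointwise majorization $\sup_{n\in\N}|h*F_n|\le c\,Mh$ a.e., which holds because the Fej\'er kernels form an approximate identity admitting an even, decreasing, integrable majorant with $L^1$-norm bounded uniformly in $n$; applied to $g\ge0$ it yields $0\le g*F_n\le c\,Mg$ a.e. Using H\"older's inequality for Banach function spaces (see \cite{BS88}) and the hypothesis that $M$ is bounded on $X'$, one obtains
\begin{equation*}
\int_\T|f|\,(g*F_n)\,dm\le c\int_\T|f|\,Mg\,dm\le c\,\|f\|_X\,\|Mg\|_{X'}\le C\,\|f\|_X\,\rho'(g),
\end{equation*}
and taking the supremum over $g$ with $\rho'(g)\le1$ proves the uniform bound.

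Next I would record that $C(\T)$ is dense in $X$: separability of $X$ forces the norm of $X$ to be absolutely continuous, hence the simple functions are dense in $X$ (see \cite{BS88}), while the inner/outer regularity of Lebesgue measure together with absolute continuity of the norm (which gives $\|\chi_E\|_X\to0$ as $m(E)\to0$) allows one to approximate characteristic functions — and hence simple functions — by continuous functions in the norm of $X$. Moreover $\|h\|_X\le\rho(\chi_\T)\,\|h\|_\infty$ for $h\in L^\infty$ by (A2) and (A4), so uniform convergence implies convergence in $X$; thus for $g\in C(\T)$ the classical Fej\'er theorem gives $\|g*F_n-g\|_X\le\rho(\chi_\T)\,\|g*F_n-g\|_\infty\to0$.

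Finally, given $f\in X$ and $\varepsilon>0$, choose $g\in C(\T)$ with $\|f-g\|_X<\varepsilon$ and estimate
\begin{equation*}
\|f*F_n-f\|_X\le\|(f-g)*F_n\|_X+\|g*F_n-g\|_X+\|g-f\|_X\le(C+1)\varepsilon+\|g*F_n-g\|_X,
\end{equation*}
so that $\limsup_{n\to\infty}\|f*F_n-f\|_X\le(C+1)\varepsilon$; letting $\varepsilon\to0$ yields \eqref{eq:Fejer-norm-convergence}. I expect the uniform bound to be the only real obstacle: the mechanism is that convolution with the nonnegative even kernel $F_n$ is formally self-adjoint, so control of $\|f*F_n\|_X=\rho''(|f*F_n|)$ is governed by the behaviour of $F_n$ — equivalently of $M$ — on the \emph{associate} space $X'$, which is precisely what the hypothesis supplies; everything after that is the standard density argument.
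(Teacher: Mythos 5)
Your proposal is correct and is essentially the argument of the cited source \cite[Theorem~3.3]{K-CM} (this paper only quotes the result): uniform boundedness of the Fej\'er means on $X$ via the Lorentz--Luxemburg duality, evenness of $F_n$, the pointwise majorization $g*F_n\le c\,Mg$, and the boundedness of $M$ on $X'$, followed by density of $C(\T)$ in the separable space $X$ and the classical Fej\'er theorem. No gaps worth noting.
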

It is well known that for $f\in L^1$ one has
\[
(f*F_n)(e^{i\theta})
=
\sum_{k=-n}^n
\widehat{f}(k)\left(1-\frac{|k|}{n+1}\right)e^{i\theta k},
\quad e^{i\theta}\in\T
\]
(see, e.g., \cite[Chap.~I]{Kat76}). This implies that if 
$f\in H[X]\subset H[L^1]=H^1$, then $f*F_n\in\cP_A$. Combining this observation
with Theorem~\ref{th:Fejer-norm-convergence}, we arrive at the following.
\begin{corollary}[{\cite[Theorem~1.2]{K-CM}}]
\label{co:density}
Suppose $X$ is a separable Banach function space on $\T$. If the 
Hardy-Littlewood maximal operator $M$ is bounded on its associate space 
$X'$, then the set of analytic polynomials $\cP_A$ is dense in the 
abstract Hardy space $H[X]$ built upon the space $X$.
\end{corollary}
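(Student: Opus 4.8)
The plan is to obtain Corollary~\ref{co:density} as a direct consequence of Theorem~\ref{th:Fejer-norm-convergence} combined with the Fourier-series representation of the Fej\'er means recalled just above the corollary. Fix an arbitrary $f\in H[X]$; the goal is to produce a sequence in $\cP_A$ that converges to $f$ in the norm of $X$, and the natural candidate is $\{f*F_n\}$.

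First I would record two elementary facts. Since $X$ is a Banach function space, it is continuously embedded in $L^1$, and hence $H[X]\subset H[L^1]=H^1$; in particular $\widehat{f}(k)=0$ for all $k<0$. Substituting this into
\[
(f*F_n)(e^{i\theta})
=\sum_{k=-n}^n\widehat{f}(k)\left(1-\frac{|k|}{n+1}\right)e^{i\theta k}
\]
makes the negative-index terms drop out, so $f*F_n$ equals $\sum_{k=0}^n\widehat{f}(k)\big(1-\tfrac{k}{n+1}\big)e^{i\theta k}$, an analytic polynomial of degree at most $n$; thus $f*F_n\in\cP_A$. One should also observe that $\cP_A\subset H[X]$: an analytic polynomial is bounded, hence lies in $X$ by axiom~(A4) together with the lattice property~(A2), and it manifestly has no nonzero Fourier coefficients of negative index, so the approximating sequence lives in the right space.

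Next I would invoke Theorem~\ref{th:Fejer-norm-convergence}: by hypothesis $X$ is separable and $M$ is bounded on $X'$, so \eqref{eq:Fejer-norm-convergence} applies to the chosen $f$, giving $\|f*F_n-f\|_X\to 0$ as $n\to\infty$. Combined with the previous paragraph, $\{f*F_n\}$ is a sequence in $\cP_A$ converging to $f$ in the norm of $X$; since $f\in H[X]$ was arbitrary, $\cP_A$ is dense in $H[X]$.

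I do not expect any genuine obstacle here: the statement is a short corollary, and all the analytic difficulty is concentrated in Theorem~\ref{th:Fejer-norm-convergence}, which may be assumed. The only points needing a (routine) verification are that each Fej\'er mean $f*F_n$ belongs to $X$ and is genuinely an analytic polynomial once $f$ has vanishing negative Fourier coefficients, and both are immediate from the Banach function space axioms and the displayed formula for $f*F_n$.
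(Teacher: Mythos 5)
Your proposal is correct and follows exactly the paper's route: the observation that $f*F_n\in\cP_A$ for $f\in H[X]\subset H^1$ (via the displayed Fourier-coefficient formula for the Fej\'er means) combined with Theorem~\ref{th:Fejer-norm-convergence} is precisely how the paper derives Corollary~\ref{co:density}. The routine verifications you add (that $f*F_n\in X$ and $\cP_A\subset H[X]$) are fine and consistent with the paper's implicit reasoning.
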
 
Note that if a Banach function space $X$ is, in addition, 
rearrangement-invariant then the requirement of the boundedness of $M$
on the space $X'$ can be omitted in Corollary~\ref{co:density}
(see \cite[Theorem~1.1]{K-CM} or \cite[Lemma~1.3(c)]{L-arXiv}). 
Le\'snik \cite{L-conjecture} conjectured that the same fact should be true 
for arbitrary, not necessarily rearrangement-invariant, Banach function 
spaces.

In this note, we first observe that Theorem~\ref{th:Fejer-norm-convergence}
does not hold for arbitrary separable Banach function spaces.
For a function $K\in L^1$, consider the convolution operator 
$C_K$ with kernel $K$ defined by
\[
C_K f=f*K, \quad f\in L^1.
\]

It follows from \cite[Theorem~2]{Sh96} that there exists a continuous
function $p:\T\to[1,\infty)$ such that the sequence of the convolution
operators $C_{F_n}$ is not uniformly bounded in the variable Lebesgue
space $L^{p(\cdot)}$ defined as the set of all $f\in L^0$ such that
\[
\int_\T |f(t)|^{p(t)}dm(t)<\infty.
\]
It is well known (see, e.g., \cite[Propostion~2.12, Theorem~2.78, 
Section~2.10.3]{CF13}) that if $p:\T\to[1,\infty)$ is continuous, then
$L^{p(\cdot)}$ is a separable Banach function 
space equipped with the norm
\[
\|f\|_{L^{p(\cdot)}}=\inf\left\{\lambda>0\ :\
\int_\T \left|\frac{f(t)}{\lambda}\right|^{p(t)}dm(t) \le 1
\right\}.
\]
Since the norms of the convolution operators $C_{F_n}$ may not be uniformly 
bounded on $L^{p(\cdot)}$, the standard argument, based on the uniform
boundedness principle, leads us to the following.
\begin{theorem}
\label{th:from-Sharapudinov}
There exist a separable Banach function space $X$ on $\T$ and a function 
$f\in X$ such that \eqref{eq:Fejer-norm-convergence} is not fulfilled.
\end{theorem}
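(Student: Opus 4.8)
The plan is to turn the cited result from \cite{Sh96} into a concrete counterexample by a routine application of the uniform boundedness principle. First I would fix, by \cite[Theorem~2]{Sh96}, a continuous function $p:\T\to[1,\infty)$ for which the convolution operators $C_{F_n}$ are \emph{not} uniformly bounded on $X:=L^{p(\cdot)}$; that is, $\sup_n\|C_{F_n}\|_{X\to X}=\infty$. As noted in the excerpt, for continuous $p$ the space $L^{p(\cdot)}$ is a separable Banach function space, so this $X$ is an admissible candidate. Each $C_{F_n}$ is a bounded linear operator on $X$: indeed $F_n$ is bounded, so $\|f*F_n\|_\infty\le\|F_n\|_\infty\|f\|_1\le C\|F_n\|_\infty\|f\|_X$ by property (A5) applied with $E=\T$, and $L^\infty\hookrightarrow X$ since $\|\chi_\T\|_X<\infty$ by (A4); hence $C_{F_n}$ maps $X$ boundedly into $L^\infty\subset X$.

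Next I would argue by contradiction. Suppose that \eqref{eq:Fejer-norm-convergence} holds for every $f\in X$, i.e.\ $C_{F_n}f\to f$ in $X$ for each $f$. Then in particular $\{C_{F_n}f\}_n$ is a bounded subset of $X$ for every $f\in X$. Since $X$ is a Banach space and each $C_{F_n}$ is a bounded operator on $X$, the Banach--Steinhaus theorem forces $\sup_n\|C_{F_n}\|_{X\to X}<\infty$, contradicting the choice of $p$. Therefore there must exist some $f\in X$ for which $\|f*F_n-f\|_X\not\to 0$, which is exactly the assertion.

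The only genuine content here is external, namely the construction in \cite{Sh96} of a continuous exponent $p$ for which the Fej\'er means are not uniformly bounded on $L^{p(\cdot)}$; everything on our side is bookkeeping. The one point deserving a line of care is the verification that each individual operator $C_{F_n}$ is bounded on $X$ (needed to invoke Banach--Steinhaus), which follows from the chain of embeddings $X\hookrightarrow L^1$ and $L^\infty\hookrightarrow X$ together with $F_n\in L^\infty$, as indicated above. Thus the main ``obstacle'' is simply citing \cite{Sh96} correctly and observing that its conclusion is stated for the relevant operator family and space; the deduction of non-convergence from non-uniform-boundedness is the standard uniform boundedness argument.
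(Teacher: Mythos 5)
Your proposal is correct and is essentially the paper's own argument: take the variable exponent $p$ from \cite[Theorem~2]{Sh96}, note that $L^{p(\cdot)}$ is a separable Banach function space, and deduce the failure of \eqref{eq:Fejer-norm-convergence} from the non-uniform boundedness of the $C_{F_n}$ via the Banach--Steinhaus theorem. Your extra check that each individual $C_{F_n}$ is bounded on $X$ (using $F_n\in L^\infty$, (A4) and (A5)) is a welcome detail that the paper leaves implicit.
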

We show that the separable Banach function space in 
Theorem~\ref{th:from-Sharapudinov} can be chosen as a weighted $L^1$ space,
that is, the techniques of variable Lebesgue spaces can be
omitted. 
\begin{theorem}[Main result 1]
\label{th:main-1}
There exist a nonnegative function $w\in L^1$ such that $w^{-1}\in L^\infty$
and a function $f$ in the separable Banach function space 
\[
X=L^1(w)=\{f\in L^0: fw\in L^1\}
\]
such that \eqref{eq:Fejer-norm-convergence} is not fulfilled.
\end{theorem}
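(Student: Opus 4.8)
The plan is to deduce the failure of \eqref{eq:Fejer-norm-convergence} for a suitable weighted space $X=L^1(w)$ from the lack of uniform boundedness of the Fej\'er operators $C_{F_n}$ on $X$ --- exactly the mechanism indicated before Theorem~\ref{th:from-Sharapudinov} --- and then to construct $w$ explicitly. First I would dispose of the routine structural points. For any nonnegative $w\in L^1$ with $w^{-1}\in L^\infty$ the functional $\rho(f)=\int_\T|f|\,w\,dm$ is a Banach function norm: properties (A1)--(A2) are obvious, (A3) is the monotone convergence theorem, (A4) holds because $w\in L^1$, and (A5) holds with $C_E=\|w^{-1}\|_{L^\infty}$ since $\int_E f\,dm\le\|w^{-1}\|_{L^\infty}\int_\T fw\,dm$. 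The space $X=L^1(w)$ is separable because $f\mapsto fw$ is an isometric isomorphism of $X$ onto the separable space $L^1$. Furthermore, each $C_{F_n}$ is a bounded operator on $X$: the functionals $f\mapsto\widehat f(k)$ are bounded on $X$ (again because $w^{-1}\in L^\infty$), the monomials $e^{ik\theta}$ belong to $X$ (because $w\in L^1$), and $f*F_n$ is a fixed finite linear combination of them. Consequently, if \eqref{eq:Fejer-norm-convergence} held for every $f\in X$, then $\sup_n\|f*F_n\|_X<\infty$ for every $f\in X$, and the uniform boundedness principle would force $\sup_n\|C_{F_n}\|_{X\to X}<\infty$. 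Thus it is enough to find $w$ as above for which $\sup_n\|C_{F_n}\|_{X\to X}=\infty$.

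The decisive step is an exact formula for these operator norms. Because $F_n\ge 0$, the pointwise estimate $|f*F_n|\le|f|*F_n$ reduces the computation of $\|C_{F_n}\|_{X\to X}$ to nonnegative $f$; a change of variables and the evenness of $F_n$, via Tonelli's theorem, then give
\[
\|f*F_n\|_{L^1(w)}=\int_\T(f*F_n)\,w\,dm=\int_\T f\,(w*F_n)\,dm=\int_\T(fw)\,\frac{w*F_n}{w}\,dm,\qquad f\ge 0 .
\]
Taking the supremum over $f\ge 0$ with $\int_\T fw\,dm\le 1$ and using the duality between $L^1$ and $L^\infty$, one obtains
\[
\|C_{F_n}\|_{X\to X}=\Big\|\frac{w*F_n}{w}\Big\|_{L^\infty} .
\]
So the whole problem reduces to constructing a nonnegative $w\in L^1$ with $w^{-1}\in L^\infty$ for which $\sup_n\big\|(w*F_n)/w\big\|_{L^\infty}=\infty$.

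For the construction I would take $w=1+\sum_{k\ge 1}h_k\chi_{E_k}$, where the $E_k\subset\T$ are pairwise disjoint arcs of angular length $\delta_k$ accumulating at a fixed point of $\T$ and arranged so that consecutive arcs $E_k$ and $E_{k+1}$ are separated by a nonempty gap; here $h_k>0$ and $\sum_k h_k\delta_k<\infty$, which makes $w\in L^1$, while $w\ge 1$ gives $w^{-1}\in L^\infty$. Let $t_k$ be the endpoint of $E_k$ adjacent to the gap, so that $w\equiv 1$ on a one-sided neighbourhood of $t_k$; since $w*F_n$ is a trigonometric polynomial, hence continuous, it follows for every $n$ that $\big\|(w*F_n)/w\big\|_{L^\infty}\ge(w*F_n)(t_k)\ge(h_k\chi_{E_k}*F_n)(t_k)$, the last inequality because $w*F_n=1+\sum_j h_j(\chi_{E_j}*F_n)$ is a sum of nonnegative terms. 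Now choose $n_k$ with $n_k\delta_k\to\infty$ while still $h_k\to\infty$ (both can be arranged at once, e.g.\ with $\delta_k=4^{-k}$, $h_k=2^k$, $n_k=\lfloor k4^k\rfloor$). Writing the convolution in angular coordinates and using the elementary Fej\'er-kernel facts $F_n(e^{i\theta})\le C/(n\theta^2)$ for $0<|\theta|\le\pi$ and $\int_0^\pi F_n(e^{i\theta})\,d\theta=\pi$, one gets
\[
(h_k\chi_{E_k}*F_{n_k})(t_k)=\frac{h_k}{2\pi}\int_0^{\delta_k}F_{n_k}(e^{i\beta})\,d\beta\ge\frac{h_k}{2\pi}\Big(\pi-\frac{C}{n_k\delta_k}\Big)\ge\frac{h_k}{4}
\]
for all large $k$. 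Hence $\|C_{F_{n_k}}\|_{X\to X}\ge h_k/4\to\infty$, and by the reduction of the first paragraph \eqref{eq:Fejer-norm-convergence} must fail for some $f\in L^1(w)$.

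I expect the last paragraph to be the main obstacle. The delicate part is to arrange the four families of parameters --- the positions of the arcs $E_k$, their lengths $\delta_k$, their heights $h_k$, and the kernel indices $n_k$ --- so that simultaneously $\sum_k h_k\delta_k<\infty$, the arcs are pairwise disjoint and separated by gaps that isolate the points $t_k$, $h_k\to\infty$, and $n_k\delta_k\to\infty$ (so that at the scale of $E_k$ the $n_k$-th Fej\'er kernel acts essentially as a point mass, whence the Fej\'er mean at $t_k$ recovers almost all the mass of $h_k\chi_{E_k}$ while $w(t_k)$ stays equal to $1$); and then to carry out the Fej\'er-kernel estimate cleanly enough to secure the bound $(h_k\chi_{E_k}*F_{n_k})(t_k)\ge h_k/4$. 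The identity $\|C_{F_n}\|_{X\to X}=\|(w*F_n)/w\|_{L^\infty}$ also requires the elementary but slightly fiddly Tonelli and change-of-variables computation indicated above. Everything else --- the Banach function space axioms, separability, boundedness of the individual $C_{F_n}$, and the Banach--Steinhaus step --- is routine.
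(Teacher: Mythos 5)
Your argument is correct, and its engine is the same as the paper's: a weight with ever-taller spikes on ever-shorter arcs, separated by gaps where $w=1$; the observation that a sufficiently localized kernel transfers a fixed positive fraction of the spike height to an adjacent point where the weight equals $1$ a.e.; and Banach--Steinhaus to convert non-uniform boundedness of the operators $C_{F_n}$ into failure of \eqref{eq:Fejer-norm-convergence}. The packaging, however, differs. The paper first proves Lemma~\ref{le:convolution-BFS-associate} (equality $\|C_K\|_{\cB(X)}=\|C_K\|_{\cB(X')}$ for an arbitrary Banach function space $X$ and any nonnegative even $K\in L^1$), takes the concrete weight \eqref{eq:weight}, and in Theorem~\ref{th:non-uniform-bounedeness} establishes the blow-up on the associate space $L^\infty(w^{-1})$, with test functions $\sqrt{m}\,\chi_{[\pi/(2m),\pi/(2m-1)]}$, for an arbitrary sequence of nonnegative, even, normalized kernels satisfying only the qualitative localization property \eqref{eq:kernel-3}; the result is then transferred to $L^1(w)$ by the lemma. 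You bypass the associate space entirely: you compute $\|C_{F_n}\|_{\cB(L^1(w))}=\|(w*F_n)/w\|_{L^\infty}$ directly (a weighted-$L^1$ specialization of the same duality, and only its lower-bound half is actually used), take the weight $1+\sum_k 2^k\chi_{E_k}$ with arcs of length $4^{-k}$, and replace the abstract localization hypothesis by the quantitative bound $F_n(e^{i\theta})\le C/(n\theta^2)$, which lets you name the indices $n_k$ explicitly. The paper's route buys generality (any approximate-identity-type kernel sequence, plus a norm-equality lemma of independent interest), while yours buys a shorter, self-contained computation tailored to the Fej\'er kernels. The two points you flag as delicate are indeed the only ones requiring care, and your treatment of both is sound: the parameters $\delta_k=4^{-k}$, $h_k=2^k$, $n_k=\lfloor k4^k\rfloor$ are compatible with all the stated constraints, and the passage from the pointwise value $(w*F_n)(t_k)$ to an essential supremum is legitimate because $w*F_n$ is continuous and $w=1$ a.e. on the gap side of $t_k$.
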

In spite of the observation made in Theorems~\ref{th:from-Sharapudinov}
and~\ref{th:main-1}, we show that the requirement of the boundedness
of the Hardy-Littlewood maximal operator $M$ on the associate space 
$X'$ of a separable Banach function space $X$ in Corollary~\ref{co:density}
can be omitted. Thus, Le\'snik's conjecture \cite{L-conjecture} is, indeed, 
true.
\begin{theorem}[Main result 2]
\label{th:main-2}
If $X$ is a separable Banach function space on $\T$, then the set of analytic
polynomials $\cP_A$ is dense in the abstract Hardy space $H[X]$
built upon the space $X$.
\end{theorem}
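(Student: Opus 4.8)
The plan is to approximate an arbitrary $f \in H[X]$ by analytic polynomials without relying on the Fej\'er means converging in norm (which may fail by Theorem~\ref{th:main-1}). The key point is that separability of $X$ gives us \emph{absolute continuity of the norm}: for a separable Banach function space, every $f \in X$ satisfies $\|f\chi_{E_n}\|_X \to 0$ whenever $\chi_{E_n} \to 0$ a.e. This, together with the density of simple functions (equivalently, of $L^\infty$, or even of $C(\T)$) in $X$, will be the engine of the argument. First I would fix $f \in H[X]$ and $\varepsilon > 0$, and choose $g \in L^\infty$ (or $g \in C(\T)$) with $\|f - g\|_X < \varepsilon$; here one uses that trigonometric polynomials, hence $C(\T)$, are dense in separable $X$.

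The difficulty is that the approximant $g$ need not lie in $H[X]$, so one cannot simply project. The standard device is to pass through the Riesz projection $P$, but $P$ need not be bounded on $X$. Instead I would argue as follows. Since $g \in L^\infty \subset L^2$, write $g = g_+ + g_-$ where $g_+ \in H^2$ collects the nonnegative Fourier modes and $g_-$ the strictly negative ones. Because $f \in H[X] \subset H^1$ and $g$ approximates $f$ in $X \hookrightarrow L^1$, the negative part $g_-$ is small; more precisely $\widehat{g}(n) = \widehat{g}(n) - \widehat{f}(n)$ for $n < 0$, and these Fourier coefficients are controlled by $\|f - g\|_{L^1} \le C\|f-g\|_X < C\varepsilon$. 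The genuine obstacle is converting this smallness of Fourier coefficients into smallness of $\|g_-\|_X$: one cannot do this directly. The resolution is to choose the approximant more carefully — take $g$ to be a Fej\'er mean $f * F_n$ of $f$ itself, which automatically lies in $\cP_A$ since $f \in H^1$; but then one must show $\|f * F_n - f\|_X$ is small along \emph{some} subsequence. Here I would invoke that $F_n * f \to f$ in $L^1$ and, crucially, exploit absolute continuity: the Fej\'er means $F_n * f$ are uniformly integrable when $f \in X$ and $X$ has absolutely continuous norm, because $|F_n * f| \le M f$ fails in general, so instead one uses the weak-type behavior or a truncation argument.

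Let me instead structure the main step cleanly. Given $f \in H[X]$, by density of $C(\T) \cap \cP_A$-closure arguments it suffices to approximate in the $X$-norm by elements of $H^\infty$, since $H^\infty \cap H[X]$ functions are themselves norm-approximated by their Fej\'er (or Ces\`aro) means uniformly, hence in $X$. So the heart of the matter reduces to: \emph{every $f \in H[X]$ is approximated in $\|\cdot\|_X$ by bounded analytic functions.} For this I would use the classical dilations $f_r(t) := f(rt)$, $0 < r < 1$, which are bounded (indeed $f_r \in H^\infty$ since $f \in H^1$), satisfy $f_r \to f$ a.e.\ and in $L^1$, and $f_r = f * P_r$ with $P_r$ the Poisson kernel. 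The estimate $|f_r(t)| \le (Mf)(t)$ \emph{does} hold pointwise up to a constant (the Poisson kernel is dominated by a decreasing radial majorant of integral $1$), but $Mf$ need not lie in $X$. To circumvent this, I would combine absolute continuity with Egorov's theorem: choose $E$ with $m(\T \setminus E)$ small so that $f_r \to f$ uniformly on $E$; on $E$ the convergence is then also in $X$-norm after multiplying by $\chi_E$, while on $\T \setminus E$ the terms $\|f \chi_{\T\setminus E}\|_X$ and $\|f_r \chi_{\T \setminus E}\|_X$ must be shown small — the first by absolute continuity of the norm, the second because $\{f_r\}$ has absolutely equicontinuous norms (this follows from $f_r = f * P_r$ and a Lebesgue–Vitali type argument using that $f \in X$ has absolutely continuous norm). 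Once $\|f - f_r\|_X$ is small, approximate the bounded analytic function $f_r$ by its Taylor polynomials, which converge to $f_r$ uniformly on $\T$, hence in $X$ by property (A4) and boundedness. This yields the claim.

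I expect the \textbf{main obstacle} to be the uniform (in $r$) control of $\|f_r \chi_{\T \setminus E}\|_X$ on small sets — i.e.\ proving the family $\{f_r\}_{0<r<1}$ has \emph{uniformly absolutely continuous norm}. This is where separability is used in an essential and slightly subtle way; the pointwise bound by $Mf$ is not enough since $Mf \notin X$ in general, so one needs a more hands-on argument, perhaps splitting $f = f\chi_{|f|>\lambda} + f\chi_{|f|\le\lambda}$, handling the bounded part via the $L^1 \to L^1$ norm of the Poisson kernel together with (A5)/(A2), and the unbounded part via absolute continuity of the norm of $f$ with $\lambda$ large. Everything else — density of $C(\T)$ in separable $X$, uniform convergence of Taylor polynomials of $H^\infty$ functions, the reduction to dilations — is routine.
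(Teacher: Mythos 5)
The decisive step of your plan --- proving $\|f-f_r\|_X\to 0$ as $r\to 1$, equivalently the uniform absolute continuity of the norms of the family $\{f_r\}_{0<r<1}$ --- is a genuine gap, and the repair you sketch cannot close it. Your treatment of the unbounded part $h=f\chi_{\{|f|>\lambda\}}$ ``via absolute continuity of the norm of $f$ with $\lambda$ large'' implicitly requires an estimate of the form $\sup_{0<r<1}\|h*P_r\|_X\le C\|h\|_X$, i.e.\ the uniform boundedness of the Poisson convolution operators $C_{P_r}$ on $X$. But for any sequence $r_n\to 1$ the kernels $P_{r_n}$ are bounded, nonnegative, even, normalized and localizing, i.e.\ they satisfy \eqref{eq:kernel-1}--\eqref{eq:kernel-3}; hence Theorem~\ref{th:non-uniform-bounedeness} applies verbatim and shows that on the separable space $X=L^1(w)$ of Lemma~\ref{le:weighted-L1-L-infty} one has $\sup_{n}\|C_{P_{r_n}}\|_{\cB(L^1(w))}=\infty$, so by the uniform boundedness principle there is $f\in L^1(w)$ with $f*P_{r_n}\not\to f$ in norm --- exactly the mechanism behind Theorem~\ref{th:main-1}. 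Thus the statement your argument actually attacks (``dilations converge in $X$-norm'') is \emph{false} for general $f\in X$, and nothing in your convergence step uses the analyticity of $f$ (the truncation $f\chi_{\{|f|\le\lambda\}}$ destroys it anyway, and the hypothesis $\widehat f(n)=0$ for $n<0$ is never invoked there). The same objection kills the earlier fallback of extracting a norm-convergent subsequence of Fej\'er means. Whether $f_r\to f$ in $X$ holds for all $f\in H[X]$ over every separable Banach function space is a delicate question that the proposal does not settle, and any proof of it would have to exploit analyticity in an essential, currently missing, way. (A minor further slip: Fej\'er means of an $H^\infty$ function need not converge \emph{uniformly}; that step would have to go through a.e.\ convergence plus dominated convergence in spaces with absolutely continuous norm, or through the Taylor-polynomial argument you use for $f_r$, which is fine.)

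For contrast, the paper's proof avoids approximate identities entirely and uses separability only through duality: by Theorem~\ref{th:dual-associate}, $X^*$ is $X'$, so if $\cP_A$ were not dense in $H[X]$ the Hahn--Banach theorem would give $g\in X'\subset L^1$ annihilating $\cP_A$ but not $f$; the annihilation forces $g\in H[X']\subset H^1$ with $\widehat g(0)=0$, the Poisson extensions satisfy $FG\in H^{1/2}(\mathbb{D})$ while the boundary function $fg$ lies in $L^1$ by H\"older's inequality for $X$ and $X'$, Smirnov's theorem (Lemma~\ref{le:Smirnov}) upgrades $FG$ to $H^1(\mathbb{D})$, and $(FG)(0)=0$ yields $\widehat{fg}(0)=0$, contradicting the separation. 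If you want to salvage an approximation-theoretic route, you must find an argument for $\|f_r-f\|_X\to 0$ (or $\|f*F_n-f\|_X\to 0$) that genuinely uses $f\in H[X]$; as written, your proof would equally ``prove'' the unrestricted convergence that Theorems~\ref{th:main-1} and~\ref{th:non-uniform-bounedeness} refute.
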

The paper is organized as follows.
In Section~\ref{sec:first-proof}, we prove that a convolution operator
$C_K$ with a nonnegative symmetric kernel $K\in L^1$ is bounded
on a Banach function space $X$ if and only if it is bounded in its associate
space $X'$. Further, we consider a special weight $w\in L^1$ such that 
$w^{-1}\in L^\infty$. Then $X=L^1(w)$ is a separable Banach function space 
with the associate space $X'=L^\infty(w^{-1})$. We show that the sequence of 
convolution operators $\{C_{K_n}\}$ with nonnegative bounded symmetric 
kernels $K_n$, satisfying $\|K_n\|_{L^1}=1$ and a natural localization
property, is not uniformly bounded on $X'=L^\infty(w^{-1})$, and therefore,
on its associate space $X''=X=L^1(w)$. Applying this result to the sequence of the 
Fej\'er kernels $\{F_n\}$, we prove Theorem~\ref{th:main-1} with the aid of 
the uniform boundedness principle.

In Section~\ref{sec:second-proof}, we recall that the separability of a Banach
function space $X$ is equivalent to $X^*=X'$. Further, we collect some
facts on the identification of the Hardy spaces $H^p$ on the unit circle
and the Hardy spaces $H^p(\mathbb{D})$ of analytic functions in the unit disk
$\mathbb{D}$. Finally, we give the proof of Theorem~\ref{th:main-2}
based on the application of the Hahn-Banach theorem, a corollary of the 
Smirnov theorem and properties of the identification of $H^1$ with 
$H^1(\mathbb{D})$.
\section{Proof of the first main result}\label{sec:first-proof}
\subsection{Norms of convolution operators on $X$ and on its associate space $X'$}
The Banach space of all bounded linear operator on a Banach space $E$ is 
denoted by $\mathcal{B}(E)$. 
\begin{lemma}\label{le:convolution-BFS-associate}
Let $X$ be a Banach function space on $\T$ and $K\in L^1$ be a nonnegative 
function such that $K(e^{i\theta})=K(e^{-i\theta})$ for almost all 
$\theta\in[-\pi,\pi]$.  Then the convolution operator $C_K$ is bounded on the 
Banach function $X$ if and only if it is bounded on its associate space 
{$X'$. In that case}
\begin{equation}\label{eq:convolution-BFS-associate-1}
\|C_K\|_{\mathcal{B}(X')}
=
\|C_K\|_{\mathcal{B}(X)} .
\end{equation}
\end{lemma}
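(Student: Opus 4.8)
The plan is to prove \eqref{eq:convolution-BFS-associate-1} by establishing the two inequalities $\|C_K\|_{\mathcal{B}(X')}\le\|C_K\|_{\mathcal{B}(X)}$ and $\|C_K\|_{\mathcal{B}(X)}\le\|C_K\|_{\mathcal{B}(X')}$ separately, together with the corresponding boundedness implications. The key structural fact is that $C_K$ is, up to the symmetry hypothesis on $K$, its own formal adjoint with respect to the pairing $\langle f,g\rangle=\int_\T f\,g\,dm$. Indeed, by Fubini's theorem, for $f,g\in L^\infty$ (say),
\[
\int_\T (C_K f)(t)\,g(t)\,dm(t)=\int_\T f(t)\,(C_{\widetilde K}g)(t)\,dm(t),
\qquad \widetilde K(e^{i\theta}):=K(e^{-i\theta}),
\]
and the hypothesis $K(e^{i\theta})=K(e^{-i\theta})$ a.e.\ gives $\widetilde K=K$, so $C_K$ is self-adjoint for this pairing.

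First I would fix the direction $\|C_K\|_{\mathcal{B}(X')}\le\|C_K\|_{\mathcal{B}(X)}=:\mathcal{N}$, assuming $C_K\in\mathcal{B}(X)$. Take $g\in X'$ with $\|g\|_{X'}\le 1$; since $K\ge0$ we have $|C_K g|\le C_K|g|$ pointwise a.e.\ by the lattice/triangle inequality for the convolution integral, and $C_K|g|\ge 0$. Using the description of the associate norm via the Lorentz–Luxemburg duality $\|h\|_{X'}=\sup\{\int_\T |h|\,|f|\,dm:\|f\|_X\le1\}$, I would write, for arbitrary $f\in X$ with $\|f\|_X\le1$,
\[
\int_\T (C_K|g|)(t)\,|f(t)|\,dm(t)=\int_\T |g(t)|\,(C_K|f|)(t)\,dm(t)\le \|g\|_{X'}\,\|C_K|f|\,\|_X\le \mathcal{N},
\]
where the equality uses nonnegativity together with the self-adjointness identity applied to $|g|,|f|\ge0$ (extended from $L^\infty$ to general nonnegative functions by monotone convergence, which is where property (A5) guarantees the integrals are finite), and the last step uses $\|C_K|f|\,\|_X\le\mathcal{N}\|f\|_X\le\mathcal{N}$. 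Taking the supremum over such $f$ yields $\|C_K|g|\,\|_{X'}\le\mathcal{N}$, hence $\|C_K g\|_{X'}\le\mathcal{N}$; this both shows $C_K\in\mathcal{B}(X')$ and gives the inequality.

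For the reverse inequality $\mathcal{N}\le\|C_K\|_{\mathcal{B}(X')}$ under the assumption $C_K\in\mathcal{B}(X')$, I would exploit that $X$ embeds isometrically into $(X')'=X''$ (the second associate space), with $\|f\|_X=\|f\|_{X''}$ for all $f\in X$ by the Lorentz–Luxemburg theorem \cite[Chap.~1, Theorem~2.7]{BS88}. Since $X'$ is itself a Banach function space with $K$ still satisfying the same symmetry hypothesis, the already-proved direction applied to $X'$ in place of $X$ gives $C_K\in\mathcal{B}(X'')$ with $\|C_K\|_{\mathcal{B}(X'')}\le\|C_K\|_{\mathcal{B}(X')}$. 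It remains to check that $C_K$ acting on $X$ is the restriction of $C_K$ acting on $X''$, so that $\|C_K\|_{\mathcal{B}(X)}\le\|C_K\|_{\mathcal{B}(X'')}$; this is immediate because the convolution is defined pointwise at the level of $L^1$ and $X,X''\subset L^1$. Combining, $\mathcal{N}\le\|C_K\|_{\mathcal{B}(X'')}\le\|C_K\|_{\mathcal{B}(X')}$, which together with the first part proves \eqref{eq:convolution-BFS-associate-1} and the equivalence of boundedness.

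The main obstacle I anticipate is the rigorous justification of the adjoint identity $\int_\T(C_K f)g\,dm=\int_\T f\,(C_K g)\,dm$ for all nonnegative measurable $f,g$, rather than just for bounded functions: one must either invoke Tonelli's theorem directly on the (everywhere nonnegative) double integral $\iint |f(e^{i(\theta-\varphi)})|\,K(e^{i\varphi})\,|g(e^{i\theta})|\,d\varphi\,d\theta$, or truncate and pass to the limit by monotone convergence. The nonnegativity of $K$ is precisely what makes Tonelli applicable without integrability hypotheses; the symmetry of $K$ is what makes the two sides literally equal rather than involving $\widetilde K$. Everything else reduces to the standard Lorentz–Luxemburg duality machinery from \cite[Chap.~1]{BS88}, which we are free to quote.
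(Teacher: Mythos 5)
Your proof is correct and uses essentially the same ingredients as the paper's---the Tonelli/symmetry adjoint identity $\int_\T (C_K u)\,v\,dm=\int_\T u\,(C_K v)\,dm$ for nonnegative $u,v$, H\"older's inequality for Banach function spaces, and the Lorentz--Luxemburg theorem $X''=X$---organized in the mirror order: you prove directly that boundedness on $X$ implies boundedness on $X'$ (needing only the definition of the associate norm, with no near-extremal element), and then bootstrap via $X''=X$, whereas the paper proves directly that boundedness on $X'$ implies boundedness on $X$ (using $\|\cdot\|_X=\|\cdot\|_{X''}$ and an $\varepsilon$-almost extremal $h\in X'$) and bootstraps by replacing $X$ with $X'$. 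This is the same approach up to that symmetric reorganization, and your handling of the measurability/integrability issues via Tonelli is sound.
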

\begin{proof}
Suppose $C_K$ is bounded on $X'$. Fix $f\in X\setminus\{0\}$. Since $K\ge 0$, 
we have  $|f*K|\le|f|*K$. According to the Lorentz-Luxemburg theorem 
(see, e.g., \cite[Chap.~1, Theorem~2.7]{BS88}), $X=X''$ with equality of the norms. Hence
\begin{align*}
\|f*K\|_X
&\le 
\|\,|f|*K\|_X=\|\,|f|*K\|_{X''}
\\
&=
\sup\left\{
\int_\T(|f|*K)(t)|g(t)|\,dm(t)\ : \ g\in X',\ \|g\|_{X'}\le 1
\right\}.
\end{align*}
Then for every $\varepsilon > 0$ there exists a function $h\in X'$ such that $h\ge 0$, $\|h\|_{X'}\le 1$,
and
\begin{equation}\label{eq:convolution-BFS-associate-2}
\|f*K\|_X\le (1 + \varepsilon)\int_\T(|f|*K)(t)h(t)\,dm(t).
\end{equation}
Taking into account that $K(e^{i\theta})=K(e^{-i\theta})$
for almost all $\theta\in\R$, by Fubini's theorem, we get
\[
\int_\T(|f|*K)(t)h(t)\,dm(t)=\int_\T(h*K)(t)|f(t)|\,dm(t).
\]
From this identity, H\"older's inequality for $X$
(see, e.g., \cite[Chap.~1, Theorem~2.4]{BS88}), 
and the boundedness of $C_K$ on $X'$,
we obtain
\begin{equation}\label{eq:convolution-BFS-associate-3}
\int_\T(|f|*K)(t)h(t)\,dm(t)
\le
\|f\|_X\|h*K\|_{X'}
\le 
\|f\|_X\|C_K\|_{\cB(X')}.
\end{equation}
It follows from \eqref{eq:convolution-BFS-associate-2}--\eqref{eq:convolution-BFS-associate-3}
that
$$
\|C_K\|_{\cB(X)}=\sup_{f\in X,f\ne 0}\frac{\|f*K\|_X}{\|f\|_X}\le (1 + \varepsilon)\|C_K\|_{\cB(X')}
$$
for every $\varepsilon > 0$,
which implies the boundedness of $C_K$ on $X$ and the inequality 
\begin{equation}\label{eq:convolution-BFS-associate-4}
\|C_K\|_{\cB(X)} \le \|C_K\|_{\cB(X')}.
\end{equation}

If $C_K$ is bounded on $X$, then using
the Lorentz-{Luxemburg} theorem and \eqref{eq:convolution-BFS-associate-4}
with $X'$ in place of $X$, we obtain that $C_K$ is bounded on $X'$ and
\begin{equation}\label{eq:convolution-BFS-associate-5}
\|C_K\|_{\cB(X')}\le \|C_K\|_{\cB(X'')}=\|C_K\|_{\cB(X)}.
\end{equation}
Combining \eqref{eq:convolution-BFS-associate-4}--\eqref{eq:convolution-BFS-associate-5},
we arrive at \eqref{eq:convolution-BFS-associate-1}.
\end{proof}
\subsection{Spaces $L^1(w)$ and $L^\infty(w^{-1})$ with a special weight $w$}
\begin{lemma}\label{le:weighted-L1-L-infty}
Let
\begin{equation}\label{eq:weight}
w\left(e^{i\theta}\right) := \left\{\begin{array}{cll}
\sqrt{m},  & 
\frac{\pi}{2m} \le |\theta| \le \frac{\pi}{2m - 1}, &
m \in \mathbb{N}  ,  
\\[3mm]
1, &   \frac{\pi}{2m + 1} < |\theta| < \frac{\pi}{2m},
&
m \in \mathbb{N}.
\end{array}\right.
\end{equation}
Then the spaces 
\[
L^1(w)=\{f\in L^0:fw\in L^1\},
\quad
L^\infty(w^{-1})=\{f\in L^0:fw^{-1}\in L^\infty\}
\]
are Banach function spaces on $\T$ with respect to the norms
\[
\|f\|_{L^1(w)}=\|fw\|_{L^1},
\quad
\|f\|_{L^\infty(w^{-1})}=\|fw^{-1}\|_{L^\infty},
\]
and $(L^1(w))'=L^\infty(w^{-1})$. Moreover, the space $L^1(w)$ is separable.
\end{lemma}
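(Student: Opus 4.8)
The plan is to check directly that $\rho(f):=\int_\T f\,w\,dm$, $f\in L^0_+$, is a Banach function norm, to compute its associate norm $\rho'$, and to deduce separability of $L^1(w)$ from an isometric identification with $L^1$.

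The starting point is the two elementary features of the weight defined in \eqref{eq:weight}: $w^{-1}\in L^\infty$ and $w\in L^1$. The first is immediate because $w\ge 1$ a.e.\ by construction, so $0<w^{-1}\le 1$ a.e.; in particular $\|w^{-1}\|_{L^\infty}=1$. For the second one splits $\int_\T w\,dm$ over the arcs in \eqref{eq:weight}: the pair of arcs on which $w=\sqrt{m}$ has total length of order $m^{-2}$, hence contributes a quantity of order $m^{-3/2}$, while the arcs on which $w=1$ contribute at most $1$ in total, so the resulting series converges. With these facts in hand, axioms (A1)--(A3) for $\rho$ are just the corresponding properties of the Lebesgue integral, using $0<w<\infty$ a.e.\ for the nondegeneracy in (A1) and the monotone convergence theorem for the Fatou property (A3); (A4) follows from $\rho(\chi_E)=\int_E w\,dm\le\|w\|_{L^1}<\infty$; and (A5) follows from the pointwise bound $f=(fw)w^{-1}\le\|w^{-1}\|_{L^\infty}\,fw$ a.e., which gives $\int_E f\,dm\le\|w^{-1}\|_{L^\infty}\,\rho(f)$, a constant independent of $E$. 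Thus $\rho$ is a Banach function norm, and by definition the Banach function space it determines is $L^1(w)=\{f\in L^0:fw\in L^1\}$ with $\|f\|_{L^1(w)}=\rho(|f|)=\|fw\|_{L^1}$.

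To identify the associate space I would change variables $h=fw$ in the definition of $\rho'$. Since $w$ is finite and strictly positive a.e., $f\mapsto fw$ maps $\{f\in L^0_+:\rho(f)\le 1\}$ bijectively onto $\{h\in L^0_+:\|h\|_{L^1}\le 1\}$, so for every $g\in L^0_+$ one gets $\rho'(g)=\sup\{\int_\T h\,(w^{-1}g)\,dm: h\in L^0_+,\ \|h\|_{L^1}\le 1\}=\|w^{-1}g\|_{L^\infty}$, the last equality being the scalar duality $(L^1)'=L^\infty$. Hence $(L^1(w))'=L^\infty(w^{-1})$ with norm $\|g\|_{L^\infty(w^{-1})}=\|gw^{-1}\|_{L^\infty}$, and, being the associate space of a Banach function space, $L^\infty(w^{-1})$ is itself a Banach function space by \cite[Chap.~1, Theorem~2.2]{BS88}.

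It remains to prove that $L^1(w)$ is separable. The cleanest route is to observe that $f\mapsto fw$ is an isometric isomorphism of $L^1(w)$ onto $L^1=L^1(\T)$, with inverse $g\mapsto gw^{-1}$ (both well defined since $0<w<\infty$ a.e.), and $L^1(\T)$ is separable; alternatively, $|f|w\in L^1$ forces $L^1(w)$ to have absolutely continuous norm, and separability follows from the standard criterion in \cite[Chap.~1]{BS88}. I do not expect any genuine obstacle in this lemma: the only places calling for a little care are the summability estimate showing $w\in L^1$ and the remark that the change of variables in the computation of $\rho'$ is legitimate precisely because $w$ is finite and strictly positive a.e.
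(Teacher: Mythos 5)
Your proposal is correct, and its skeleton is the same as the paper's: everything hinges on the two properties $w\in L^1$ (the summability estimate over the arcs, which matches the series computation in \eqref{eq:weight-L1}) and $w^{-1}\in L^\infty$ (since $w\ge 1$ a.e.). The difference is only in how the remaining, routine part is handled. The paper, having checked these two facts, simply invokes general results on weighted Banach function spaces: \cite[Lemma~2.5]{K03} gives that $L^1(w)$ and $L^\infty(w^{-1})$ are Banach function spaces with $(L^1(w))'=L^\infty(w^{-1})$, and separability comes from \cite[Proposition~2.6]{K03} (absolute continuity of the norm) combined with \cite[Chap.~1, Corollary~5.6]{BS88}. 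You instead verify the axioms (A1)--(A5) by hand, compute $\rho'$ directly via the substitution $h=fw$ and the duality $(L^1)'=L^\infty$ (legitimate precisely because $0<w<\infty$ a.e., as you note), and obtain separability from the isometric isomorphism $f\mapsto fw$ of $L^1(w)$ onto the separable space $L^1(\T)$ --- the route you mention as an alternative is essentially the paper's. Your version is self-contained and elementary; the paper's is shorter because it delegates to results that cover general weighted Banach function spaces, not just the $L^1$ case.
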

\begin{proof}
It is clear that $w^{-1}\in L^\infty$ and, since
\begin{align}
\|w\|_{L^1}
&=
\frac{1}{2\pi}\int_{-\pi}^\pi w(e^{i\theta})\,d\theta
\nonumber\\
&=
\sum_{m=1}^\infty\left(\frac{1}{2m}-\frac{1}{2m+1}\right)
+
\sum_{m=1}^\infty\sqrt{m}\left(\frac{1}{2m-1}-\frac{1}{2m}\right)<\infty,
\label{eq:weight-L1}
\end{align}
we also have $w\in L^1$. Then it follows from \cite[Lemma~2.5]{K03} that
$L^1(w)$ and $L^\infty(w^{-1})$ are Banach function spaces and
$(L^1(w))'=L^\infty(w^{-1})$. Finally, the separability of the space $L^1(w)$ 
follows from \cite[Proposition~2.6]{K03} and 
\cite[Chap.~1, Corollary~5.6]{BS88}.
\end{proof}
\subsection{Norms of convolution operators are not uniformly bounded
on {the} spaces $L^1(w)$ and $L^\infty(w^{-1})$ with 
{the} special weight $w$}
\begin{theorem}\label{th:non-uniform-bounedeness}
Let $\{K_n\}$ be a sequence of bounded functions $K_n:\T\to\C$
such that
\begin{equation}\label{eq:kernel-1}
K_n(e^{i\theta})\ge 0,\quad K_n(e^{i\theta})=K_n(e^{-i\theta})
\quad\mbox{a.e. on}\quad[-\pi,\pi],
\end{equation}
\begin{equation}\label{eq:kernel-2}
\frac{1}{2\pi}\int_{-\pi}^\pi K_n\left(e^{i\theta}\right)\, d\theta = 1,
\end{equation}
and
\begin{equation}\label{eq:kernel-3}
\lim_{n \to \infty}\, \sup_{\varepsilon \le |\theta| \le \pi} 
K_n\left(e^{i\theta}\right) = 0  \quad\mbox{for {each}}
\quad \varepsilon > 0.
\end{equation}
If $w$ is the weight given by \eqref{eq:weight}, then the convolution
operators $C_{K_n}$ are bounded on $L^\infty(w^{-1})$ and on $L^1(w)$
for all $n\in\N$, however,
\begin{align}
&\sup_{n\in\N}\|C_{K_n}\|_{\mathcal{B}(L^\infty(w^{-1}))}=\infty,
\label{eq:non-uniform-boundedness-1}
\\
&\sup_{n\in\N}\|C_{K_n}\|_{\mathcal{B}(L^1(w))}=\infty.
\label{eq:non-uniform-boundedness-2}
\end{align}
\end{theorem}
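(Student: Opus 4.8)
The plan is to prove \eqref{eq:non-uniform-boundedness-1} directly and then deduce \eqref{eq:non-uniform-boundedness-2} from it. As a preliminary I would check that each $C_{K_n}$ is bounded on $L^\infty(w^{-1})$: since $w\in L^1$ by \eqref{eq:weight-L1} and $K_n$ is bounded, $w*K_n$ is a bounded (indeed continuous) function, and because $w\ge 1$ a.e.\ one has, for $f\in L^\infty(w^{-1})$, the pointwise bound $|f*K_n|\le|f|*K_n\le\|f\|_{L^\infty(w^{-1})}(w*K_n)\le\|f\|_{L^\infty(w^{-1})}\|w*K_n\|_{L^\infty}w$, whence $\|C_{K_n}\|_{\mathcal{B}(L^\infty(w^{-1}))}\le\|w*K_n\|_{L^\infty}<\infty$. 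Since $K_n\ge 0$ is symmetric, Lemma~\ref{le:convolution-BFS-associate} together with $(L^1(w))'=L^\infty(w^{-1})$ from Lemma~\ref{le:weighted-L1-L-infty} then gives that $C_{K_n}$ is bounded on $L^1(w)$ as well, with $\|C_{K_n}\|_{\mathcal{B}(L^1(w))}=\|C_{K_n}\|_{\mathcal{B}(L^\infty(w^{-1}))}$; hence \eqref{eq:non-uniform-boundedness-2} will follow once \eqref{eq:non-uniform-boundedness-1} is proved.

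For the lower bound I would simply test $C_{K_n}$ on the function $f=w$, which satisfies $\|w\|_{L^\infty(w^{-1})}=1$, so that $\|C_{K_n}\|_{\mathcal{B}(L^\infty(w^{-1}))}\ge\|(w*K_n)/w\|_{L^\infty}$. Fix $m\in\N$ and set $L_m:=\pi/(2m-1)-\pi/(2m)>0$. On the arc $\{e^{i\theta}:\pi/(2m+1)<\theta<\pi/(2m)\}$ the weight $w$ equals $1$, and $w*K_n$ is continuous, so $\|(w*K_n)/w\|_{L^\infty}$ dominates the supremum of $w*K_n$ over that arc, which in turn, letting $\theta\uparrow\pi/(2m)$, dominates the value $(w*K_n)(e^{i\pi/(2m)})$. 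The crucial observation is that when $\theta$ runs over $[-L_m,0]$ the point $\pi/(2m)-\theta$ runs over $[\pi/(2m),\pi/(2m-1)]$, on which $w$ equals $\sqrt m$ by \eqref{eq:weight}; keeping only this range in the convolution integral yields $(w*K_n)(e^{i\pi/(2m)})\ge\frac{\sqrt m}{2\pi}\int_{-L_m}^{0}K_n(e^{i\theta})\,d\theta$.

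Evaluating at the corner $e^{i\pi/(2m)}$ is what makes the argument work, because the interval $[-L_m,0]$ reaches the origin, around which the mass of $K_n$ concentrates. By the symmetry in \eqref{eq:kernel-1} this last integral equals $\tfrac12\int_{-L_m}^{L_m}K_n(e^{i\theta})\,d\theta$, and by \eqref{eq:kernel-2}--\eqref{eq:kernel-3} applied with $\varepsilon=L_m$ we have $\int_{-L_m}^{L_m}K_n(e^{i\theta})\,d\theta\ge 2\pi\big(1-\sup_{L_m\le|\theta|\le\pi}K_n(e^{i\theta})\big)\to 2\pi$ as $n\to\infty$. Hence for every $m$ there is an index $n(m)$ with $\int_{-L_m}^{L_m}K_{n(m)}(e^{i\theta})\,d\theta\ge\pi$, so that $(w*K_{n(m)})(e^{i\pi/(2m)})\ge\sqrt m/4$ and therefore $\|C_{K_{n(m)}}\|_{\mathcal{B}(L^\infty(w^{-1}))}\ge\sqrt m/4$. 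Letting $m\to\infty$ proves \eqref{eq:non-uniform-boundedness-1}, and then \eqref{eq:non-uniform-boundedness-2} follows as explained above.

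I expect the only genuinely delicate point to be the passage to the corner point $e^{i\pi/(2m)}$: one must legitimize evaluating $w*K_n$ there, using that $w*K_n$ is continuous and that $w\equiv 1$ on the adjacent arc, since $w$ itself is discontinuous at that point. Everything else reduces to the elementary estimates above; in particular, no quantitative control on the rate of localization in \eqref{eq:kernel-3} is needed, because symmetry turns the relevant quantity into $\tfrac12\int_{-L_m}^{L_m}K_n$, which is handled by the normalization \eqref{eq:kernel-2} and the mere qualitative limit in \eqref{eq:kernel-3}.
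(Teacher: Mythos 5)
Your proof is correct, and while the global skeleton matches the paper's (establish the blow-up on $L^\infty(w^{-1})$ first, then transfer it to $L^1(w)$ via the norm equality of Lemma~\ref{le:convolution-BFS-associate} and $(L^1(w))'=L^\infty(w^{-1})$), the execution of the key lower bound is genuinely different. The paper tests $C_{K_n}$ on the truncated spikes $v_m=\sqrt{m}\,\chi_{\{\pi/(2m)\le\theta\le\pi/(2m-1)\}}$ and, to cope with the essential supremum, proves a pointwise bound $\ge\sqrt{m}/(8\pi)$ on a whole interval $[\pi/(2m)-\delta_n,\pi/(2m)]$ of positive measure, which requires an auxiliary $\delta_n>0$ chosen so that $\int_{-\pi/(2m)^2}^{-\delta_n}K_n\ge 1/4$; this is purely measure-theoretic and never uses that $K_n$ is bounded. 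You instead test on $f=w$ itself (also of unit norm, and dominating $v_m$), and replace the positive-measure argument by the observation that $w*K_n$ is continuous (convolution of $w\in L^1$ with $K_n\in L^\infty$, via continuity of translation in $L^1$), so the essential supremum of $w*K_n$ over the adjacent arc where $w\equiv 1$ can be bounded below by the value at the corner $e^{i\pi/(2m)}$; there the restriction of the convolution integral to $[-L_m,0]$, symmetry \eqref{eq:kernel-1}, normalization \eqref{eq:kernel-2} and localization \eqref{eq:kernel-3} give $\ge\sqrt{m}/4$ directly. Your route buys cleaner bookkeeping (no $\delta_n$, slightly better constants) at the price of invoking continuity of $w*K_n$, which genuinely uses the boundedness of $K_n$ (part of the hypotheses, so this is legitimate, but the paper's argument would survive with merely integrable kernels). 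Your preliminary boundedness step is also reversed relative to the paper (direct estimate on $L^\infty(w^{-1})$ rather than on $L^1(w)$), which is equally valid; do make sure the continuity of $w*K_n$ is stated with its one-line justification, since, as you note, it is the one point where the evaluation at the discontinuity point of $w$ must be legitimized.
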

\begin{proof}
By \eqref{eq:weight}--\eqref{eq:weight-L1}, $w\in L^1$ and 
$w^{-1}\in L^\infty$. Therefore, for every $n\in\N$,
\begin{align*}
\|C_{K_n} f\|_{L^1(w)} 
&\le  
\frac{1}{2\pi}\left\|
\int_{-\pi}^\pi 
K_n(e^{i(\cdot - \theta)}) 
\left|f\left(e^{i\theta}\right)\right|\, d\theta
\right\|_{L^1(w)} 
\\
& \le 
\frac{1}{2\pi}
\int_{-\pi}^\pi \left\|K_n(e^{i(\cdot - \theta)})\right\|_{L^1(w)} 
\left|f\left(e^{i\theta}\right)\right|\, d\theta 
\\
& 
\le 
\frac{1}{2\pi} \|K_n\|_{L^\infty} \|w\|_{L^1} \|f\|_{L^1} 
\\
&=
\frac{1}{2\pi} \|K_n\|_{L^\infty} \|w\|_{L^1} \|w^{-1}fw\|_{L^1}
\\
&\le 
\frac{1}{2\pi} 
\|K_n\|_{L^\infty} \|w\|_{L^1} \|w^{-1}\|_{L^\infty}\|f\|_{L^1(w)}.
\end{align*}
Hence
\[
\|C_{K_n}\|_{\cB(L^1(w))} 
\le 
\frac{1}{2\pi} 
\|K_n\|_{L^\infty} \|w\|_{L^1} \|w^{-1}\|_{L^\infty},
\quad
n\in\N.
\]
It follows from \eqref{eq:kernel-1} and 
Lemmas~\ref{le:convolution-BFS-associate}--\ref{le:weighted-L1-L-infty} that 
the operators $C_{K_n}$ are bounded on $L^\infty(w^{-1})$ for all $n\in\N$. 
Moreover, \eqref{eq:non-uniform-boundedness-1} implies 
\eqref{eq:non-uniform-boundedness-2}.

Let us prove \eqref{eq:non-uniform-boundedness-1}.
Consider the sequence
\[
v_m\left(e^{i\theta}\right) 
:= 
\left\{\begin{array}{cl}
\sqrt{m},  & \frac{\pi}{2m} \le \theta \le \frac{\pi}{2m - 1}\,, 
\\[3mm]
0 , &   
\theta \in [-\pi, \pi]\setminus \left[\frac{\pi}{2m}, \frac{\pi}{2m - 1}\right],
\end{array}\right.
\quad
m\in\N.
\]
Then it follows from \eqref{eq:weight} that
$\|v_m\|_{L^\infty(w^{-1})} = 1$ for all $m\in\N$.

Fix $m\in\N$.
According to \eqref{eq:kernel-2} and the localization property
\eqref{eq:kernel-3}, there exists $n(m)\in\N$ such that
\[
\int_{-\frac{\pi}{(2m)^2}}^0 K_n\left(e^{i\theta}\right)\, d\theta 
= 
\frac{1}{2} 
\int_{-\frac{\pi}{(2m)^2}}^{\frac{\pi}{(2m)^2}} K_n\left(e^{i\theta}\right)\, d\theta 
\ge 
\frac{1}{3} 
\quad \mbox{for all}\quad n \ge n(m). 
\]
Since $K_n\in L^1$, for every $n \ge n(m)$, there exists $\delta_n > 0$ such 
that
\[
\int_{-\frac{\pi}{(2m)^2}}^{-\delta_n} 
K_n\left(e^{i\theta}\right)\, d\theta \ge \frac{1}{4}.
\]
Therefore, for almost all
$\vartheta \in \left[\frac{\pi}{2m}-\delta_n,\frac{\pi}{2m}\right]$, one gets
\begin{align}
\left(C_{K_n} v_m\right)\left(e^{i\vartheta}\right) 
&= 
\frac{\sqrt{m}}{2\pi} 
\int_{ \frac{\pi}{2m}}^{\frac{\pi}{2m - 1}} 
K_n\left(e^{i\vartheta - i\theta}\right)\, d\theta
\notag\\
&\ge 
\frac{\sqrt{m}}{2\pi} 
\int_{ \frac{\pi}{2m}}^{\frac{\pi}{2m} + \frac{\pi}{(2m)^2}} 
K_n\left(e^{i\vartheta - i\theta}\right)\, d\theta 
\notag\\
&=
\frac{\sqrt{m}}{2\pi}
\int_{\vartheta-\frac{\pi}{2m}-\frac{\pi}{(2m)^2}}^{\vartheta-\frac{\pi}{2m}}
K_n\left(e^{i\eta}\right)\, d\eta
\notag\\
&\ge 
\frac{\sqrt{m}}{2\pi} 
\int_{-\frac{\pi}{(2m)^2}}^{-\delta_n} K_n\left(e^{i\eta}\right)\, d\eta 
\ge \frac{\sqrt{m}}{8\pi}.
\label{eq:non-uniform-boundedness-3}
\end{align}
In view of \eqref{eq:weight}, $w(e^{i\vartheta})=1$ for all
$\vartheta\in\left(
\max\left\{\frac{\pi}{2m}-\delta_n,\frac{\pi}{2m+1}\right\},\frac{\pi}{2m}
\right)$. Hence, it follows from \eqref{eq:non-uniform-boundedness-3}
that
\[
\|C_{K_n} v_m\|_{L^\infty(w^{-1})} \ge \frac{\sqrt{m}}{8\pi}
\quad\mbox{for all}\quad n \ge n(m),
\]
while $\|v_m\|_{L^\infty(w^{-1})} = 1$. So
\[
\|C_{K_n}\|_{\cB(L^\infty(w^{-1}))} \ge \frac{\sqrt{m}}{8\pi} 
\quad\mbox{for all}\quad
n \ge n(m).
\]
Since $m\in\N$ is arbitrary, the latter inequality immediately
implies \eqref{eq:non-uniform-boundedness-1}.
\end{proof}
\subsection{Proof of Theorem~\ref{th:main-1}}
Let $X=L^1(w)$, where $w$ is the weight given by \eqref{eq:weight}. By
Lemma~\ref{le:weighted-L1-L-infty}, $X$ is a separable Banach function
space. It is well known (and not difficult to check) that the sequence
$\{F_n\}$ of the Fej\'er kernels is a sequence of bounded functions satisfying
\eqref{eq:kernel-1}--\eqref{eq:kernel-3}. By 
Theorem~\ref{th:non-uniform-bounedeness}, the operators $C_{F_n}$ are bounded
on $X$ for every $n\in\N$. 

Assume that \eqref{eq:Fejer-norm-convergence}
is fulfilled for all $f\in X$. Then, for all $f\in X$, the sequence 
$\{C_{F_n}f\}$ is bounded in $X$. Therefore, by the uniform boundedness 
principle, the sequence $\{\|C_{F_n}\|_{\cB(X)}\}$ is bounded, but
this contradicts \eqref{eq:non-uniform-boundedness-2}. Thus, there
exists a function $f\in X$ such that \eqref{eq:Fejer-norm-convergence}
does not hold.
\qed
\section{Proof of the second main result}\label{sec:second-proof}
\subsection{Separable Banach function spaces $X$ are spaces for which $X^*$ 
is\\ isometrically isomorphic to $X'$}
Combining \cite[Chap.~I, Corollaries 4.3 and 5.6]{BS88} and observing that
the measure $dm$ is separable (for the definition of a separable measure, 
see, e.g., \cite[p. 27]{BS88} or \cite[Chap.~I, Section 6.10]{KA82}), we arrive at 
the following.
\begin{theorem}\label{th:dual-associate}
Let $X$ be a Banach function space on $\T$. Then $X$ is separable if and only 
if its dual space $X^*$ is isometrically isomorphic to the associate space $X'$.
\end{theorem}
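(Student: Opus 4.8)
The plan is to route the equivalence through the notion of \emph{absolute continuity of the norm}, the standard bridge in Banach function space theory linking separability to the identification $X^*=X'$. Recall that a function $f\in X$ has absolutely continuous norm if $\|f\chi_{E_n}\|_X\to 0$ whenever $\{E_n\}$ is a sequence of measurable sets with $\chi_{E_n}\to 0$ a.e., and that $X$ has absolutely continuous norm if every $f\in X$ does. The two facts I would assemble are: (i) $X^*$ is isometrically isomorphic to $X'$ if and only if $X$ has absolutely continuous norm; and (ii) over a separable measure space, $X$ is separable if and only if $X$ has absolutely continuous norm. Chaining these yields $X$ separable $\Leftrightarrow$ $X$ has absolutely continuous norm $\Leftrightarrow$ $X^*\cong X'$ isometrically, which is exactly the assertion.

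For (i), I would first record that the associate space always embeds isometrically into the dual via the natural pairing $g\mapsto\big(f\mapsto\int_\T f(t)g(t)\,dm(t)\big)$, the isometry being immediate from the definition of the associate norm together with H\"older's inequality for $X$ (both recalled in Lemma~\ref{le:convolution-BFS-associate}). The substantive point is that this embedding is \emph{onto} precisely when the norm is absolutely continuous. Given $\Lambda\in X^*$, one sets $\nu(E):=\Lambda(\chi_E)$, which is well defined since $\chi_E\in X$ for every set $E$ of finite measure by axiom (A4); absolute continuity of the norm forces $\nu$ to be countably additive and absolutely continuous with respect to $m$, so the Radon-Nikodym theorem produces a density $g$ with $\Lambda(f)=\int_\T fg\,dm$ on simple functions, and a truncation and monotone-convergence argument gives $g\in X'$ with $\|g\|_{X'}=\|\Lambda\|_{X^*}$. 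Conversely, if $X$ fails to have absolutely continuous norm, a Hahn-Banach extension of a suitable functional produces a ``singular'' element of $X^*$ not represented by any $X'$ function. This is the equivalence in \cite[Chap.~I, Corollary~4.3]{BS88}.

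For (ii), the direction ``absolutely continuous norm $\Rightarrow$ separable'' uses that the simple functions built from a countable algebra of sets generating the measure algebra are norm-dense in the absolutely continuous part of $X$; the reverse direction ``separable $\Rightarrow$ absolutely continuous norm'' is proved contrapositively, by turning a function of non-absolutely-continuous norm into an uncountable $\delta$-separated family in $X$. This is the one place where a hypothesis on the measure enters: the construction of a countable dense set needs $(\T,dm)$ to be a \emph{separable} measure space, which holds because the Lebesgue measure algebra on $\T$ is generated, modulo null sets, by the countable family of arcs with rational endpoints. This is \cite[Chap.~I, Corollary~5.6]{BS88}. Combining (i) and (ii) finishes the proof. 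The only genuine obstacle is verifying the separability of $dm$, since the two equivalences themselves are the quoted corollaries, whose hearts are the Radon-Nikodym representation in (i) and the density-of-simple-functions argument in (ii).
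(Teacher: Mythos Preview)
Your proposal is correct and follows precisely the paper's own route: the paper simply cites \cite[Chap.~I, Corollaries~4.3 and~5.6]{BS88} together with the separability of $dm$, and you have unpacked exactly those two corollaries (via absolute continuity of the norm) and the same measure-theoretic observation. The only difference is the level of detail you provide in sketching the Radon--Nikodym and density arguments behind those corollaries.
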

\subsection{Hardy spaces on the unit disk}
Let $\mathbb{D}$ denote the open unit disk in the complex plane $\C$. Recall
that a function $F$ analytic in $\mathbb{D}$ is said to belong to the Hardy 
space $H^p(\mathbb{D})$, $0<p\le\infty$, if the integral mean
\begin{align*}
&
M_p(r,F)=\left(\frac{1}{2\pi}\int_{-\pi}^\pi |F(re^{i\theta})|^p\,d\theta\right)^{1/p},
\quad
0<p<\infty,
\\
&
M_\infty(r,F)=\max_{-\pi\le\theta\le\pi}|F(re^{i\theta})|,
\end{align*}
remains bounded as $r\to 1$. If $F\in H^p(\mathbb{D})$, $0<p\le\infty$, then
the nontangential limit
\[
f(e^{i\theta})=\lim_{r\to 1-0}F(re^{i\theta})
\]
exists for almost all $\theta\in[-\pi,\pi]$ (see, e.g., \cite[Theorem~2.2]{D70})
and the boundary function $f=f(e^{i\theta})$ belongs to $L^p$.

The following lemma is an immediate consequence of the Smirnov theorem
(see, e.g., \cite[Theorem~2.1]{D70}).
\begin{lemma}\label{le:Smirnov}
If $F\in H^p(\mathbb{D})$ for some $p\in(0,1)$ and its boundary function
$f$ belongs to $L^1$, then $F\in H^1(\mathbb{D})$.
\end{lemma}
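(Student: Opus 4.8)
The plan is to deduce the lemma directly from Smirnov's theorem, exploiting the fact that every Hardy space sits inside the Smirnov class. Recall that Smirnov's theorem (the result cited as \cite[Theorem~2.1]{D70}) asserts the following: if an analytic function $G$ on $\mathbb{D}$ belongs to the Smirnov class $N^+$ and its nontangential boundary function lies in $L^q$ for some $q>0$, then in fact $G\in H^q(\mathbb{D})$. The strategy is simply to verify that the hypotheses of this theorem are met with $G=F$ and $q=1$, whence the conclusion $F\in H^1(\mathbb{D})$ is immediate.

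Carrying this out, I would proceed in three steps. First, I would invoke the standard inclusion $H^p(\mathbb{D})\subset N^+$, valid for every $p\in(0,\infty]$ (see, e.g., the discussion of the Nevanlinna and Smirnov classes in \cite[Chap.~2]{D70}); applied to the given function, this places $F$ in $N^+$. Second, the nontangential boundary function $f$ of $F$---which exists a.e.\ by the result quoted just before the lemma---lies in $L^1$ by assumption. Third, with $F\in N^+$ and $f\in L^1$ in hand, Smirnov's theorem with $q=1$ gives $F\in H^1(\mathbb{D})$, completing the argument.

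The single delicate point, and the reason the hypothesis $F\in H^p(\mathbb{D})$ (rather than mere membership in the full Nevanlinna class $N$) cannot be dropped, is precisely the inclusion $F\in N^+$. For a general $F\in N$ the integrability of the boundary function does not force $F\in H^1$: reciprocals of singular inner functions belong to $N$ and have unimodular boundary values, yet lie in no $H^q$. The assumption $F\in H^p(\mathbb{D})$ is exactly what excludes such behaviour, since passing to the canonical factorization $F=cBSO$ of an $H^p$ function shows that the singular inner factor occurs only in the numerator, i.e.\ $F\in N^+$. If a self-contained argument bypassing the citation were wanted, this same factorization would do the work directly: writing $|F|\le|O|$ with $O$ the outer factor determined by $\log|f|$, one checks that an outer function has $L^1$ boundary values if and only if it belongs to $H^1(\mathbb{D})$, and the bound $|F|\le|O|$ then transfers this integrability to $F$ itself.
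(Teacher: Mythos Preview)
Your proposal is correct and matches the paper's approach exactly: the paper does not give a standalone proof but simply states that the lemma is an immediate consequence of Smirnov's theorem (\cite[Theorem~2.1]{D70}), which is precisely the deduction you spell out via the inclusion $H^p(\mathbb{D})\subset N^+$. Your additional remarks on why membership in $N$ alone would not suffice, and the alternative factorization argument, are correct elaborations that go beyond what the paper records.
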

Recall that if $f\in H^1$ then its analytic extension $F$ into 
$\mathbb{D}$, given by the Poisson integral 
\[
F(re^{i\theta})
=
\frac{1}{2\pi}\int_{-\pi}^\pi P(r,\theta-\varphi) f(e^{i\varphi})\,d\varphi,
\quad
0\le r<1,\quad -\pi\le\theta\le\pi,
\]
where
\[
P(r,\theta)
=
\frac{1-r^2}{1-2r\cos\theta+r^2},
\quad
0\le r<1,\quad -\pi\le\theta\le\pi,
\]
is the Poisson kernel, belongs to $H^1(\mathbb{D})$ and the boundary
function of $F$ coincides with $f$ a.e. on $\T$ (see, e.g., 
\cite[Theorem~3.1]{D70}).

It is important to note that the Taylor coefficients of
$F\in H^p(\mathbb{D})$ coincide with the Fourier coefficients of its
boundary function $f\in L^p$. More precisely, one has the following.
\begin{theorem}[{\cite[Theorem~3.4]{D70}}]
\label{th:H1-Taylor-Fourier}
Let $F(z)=\sum_{n=0}^\infty a_n z^n$ belong to $H^1(\mathbb{D})$ and
let $\{\widehat{f}(n)\}$ be the sequence of the Fourier coefficients of its 
boundary function $f\in L^1$. Then $\widehat{f}(n)=a_n$ for all $n\ge 0$
and $\widehat{f}(n)=0$ for $n<0$.
\end{theorem}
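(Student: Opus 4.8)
The plan is to relate the two sets of coefficients through the circular dilates of $F$. For $0\le r<1$ set $F_r(e^{i\theta}):=F(re^{i\theta})$. I would first compute the Fourier coefficients of $F_r$ directly from the power series, and then let $r\to1^-$, using the mean convergence of $F_r$ to the boundary function $f$ in $L^1$ to transfer the computation from $F_r$ to $f$.

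First I would fix $r\in[0,1)$ and use that $F(z)=\sum_{n=0}^\infty a_nz^n$ converges absolutely and uniformly on the circle $|z|=r$, so that $F(re^{i\theta})=\sum_{n=0}^\infty a_nr^ne^{in\theta}$ may be integrated term by term against $e^{-ik\theta}$. By orthogonality of the exponentials this gives, for every $k\in\Z$,
\[
\frac{1}{2\pi}\int_{-\pi}^\pi F(re^{i\theta})e^{-ik\theta}\,d\theta
=
\begin{cases}
a_kr^k, & k\ge 0,\\
0, & k<0.
\end{cases}
\]
Next I would pass to the limit $r\to1^-$. The key analytic input is the standard mean-convergence property of $H^1$ functions, namely $\|F_r-f\|_{L^1}\to0$ as $r\to1^-$ (see, e.g., \cite{D70}). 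Since the map $g\mapsto\widehat{g}(k)$ is $1$-Lipschitz from $L^1$ to $\C$, for every $k\in\Z$ we have
\[
\left|\frac{1}{2\pi}\int_{-\pi}^\pi F(re^{i\theta})e^{-ik\theta}\,d\theta-\widehat{f}(k)\right|
\le
\|F_r-f\|_{L^1}\to0
\qquad(r\to1^-).
\]
Combining this with the previous display yields $\widehat{f}(k)=\lim_{r\to1^-}a_kr^k=a_k$ for $k\ge0$ and $\widehat{f}(k)=0$ for $k<0$, which is precisely the assertion.

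The step I expect to be the main obstacle is the mean convergence $\|F_r-f\|_{L^1}\to0$; unlike the elementary coefficient computation, this is a genuine theorem about $H^1$ (proved, for instance, via the canonical inner-outer factorization or the existence of a harmonic majorant for $|F|$), and it is where the hypothesis $F\in H^1(\mathbb{D})$, rather than merely $F\in H^p(\mathbb{D})$ with $p<1$, is essential. An alternative route that sidesteps mean convergence is to show, using the Poisson integral representation recalled just before this theorem, that the given $F$ equals the Poisson integral of its boundary function $f$, to expand the Poisson kernel as $P(r,\theta)=\sum_{n\in\Z}r^{|n|}e^{in\theta}$, and to compare coefficientwise the two Fourier expansions $\sum_{n\ge0}a_nr^ne^{in\theta}$ and $\sum_{n\in\Z}r^{|n|}\widehat{f}(n)e^{in\theta}$ of $F_r$; equating the coefficients of $e^{in\theta}$ then gives $a_n=\widehat{f}(n)$ for $n\ge0$ and $\widehat{f}(n)=0$ for $n<0$ at once.
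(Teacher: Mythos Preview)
The paper does not give its own proof of this statement: it is quoted verbatim as \cite[Theorem~3.4]{D70} and used as a black box in the proof of Theorem~\ref{th:main-2}. So there is no ``paper's proof'' to compare against beyond the reference to Duren.

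Your argument is correct and is in fact the classical one (and essentially the proof in \cite{D70}): compute $\widehat{F_r}(k)$ from the power series, then use the $L^1$-mean convergence $\|F_r-f\|_{L^1}\to 0$ (valid because $F\in H^1(\mathbb{D})$) to pass to the limit in the Fourier coefficients. You correctly identify the mean-convergence step as the only nontrivial input and point to the right dependence on the hypothesis $p\ge 1$. The alternative Poisson-kernel route you sketch is also standard and works, though note that it implicitly uses the same circle of ideas (the identification of an $H^1(\mathbb{D})$ function with the Poisson integral of its boundary values relies on $\sup_r M_1(r,F)<\infty$ and a weak-$*$ compactness/uniqueness argument), so it does not truly sidestep the substantive part.
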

\subsection{Proof of Theorem~\ref{th:main-2}}
Suppose $\cP_A$ is not dense in $H[X]$. Take any function $f \in H[X]$ that 
does not belong to the closure of $\cP_A$ with respect to the norm of $X$. 
Since $X$ is separable, it follows from Theorem~\ref{th:dual-associate}
that $X^*$ is isometrically isomorphic to $X'$. Then, by a corollary of 
the Hahn-Banach theorem
(see, e.g., \cite[Chap.~7, Theorem~4.1]{BSU96}), there exists
a function $g \in X' \subset L^1$ such that
\begin{equation}\label{eq:main-1}
\int_{-\pi}^\pi f\left(e^{i\theta}\right)g\left(e^{i\theta}\right)\, d\theta \not= 0 
\end{equation}
and
\[
\int_{-\pi}^\pi p\left(e^{i\theta}\right)g\left(e^{i\theta}\right)\, d\theta = 0 
\quad\mbox{for all}\quad p \in \mathcal{P}_A.
\]
In particular, if $p(e^{i\theta})=e^{in\theta}$ with $n=0,1,2,\dots$, then
\begin{equation}\label{eq:main-2}
\widehat{g}(-n)=0
\quad\mbox{for all}\quad
n = 0, 1, 2, \dots.
\end{equation}
Hence $g\in H[X']\subset H^1$.
For functions $f\in H[X]\subset H^1$ and $g\in H[X']\subset H^1$, let
$F$ and $G$ denote their analytic extensions to the unit disk $\mathbb{D}$
by means of their Poisson integrals. Then $F,G\in H^1(\mathbb{D})$.
It follows from \eqref{eq:main-2} and Theorem~\ref{th:H1-Taylor-Fourier}
that $G(0) = 0$. Since $F,G\in H^1(\mathbb{D})$, by H\"older's inequality, 
$FG \in H^{1/2}(\mathbb{D})$. On the other hand, since $f\in X$ 
and $g\in X'$,
it follows from H\"older's inequality for Banach function spaces (see 
\cite[Chap.~1, Theorem~2.4]{BS88}) that $fg \in L^1$. Then 
it follows from Lemma~\ref{le:Smirnov} that $FG \in H^1(\mathbb{D})$. Since 
$(FG)(0) = F(0)G(0) = 0$, applying Theorem~\ref{th:H1-Taylor-Fourier}
to $FG$, we obtain $\widehat{fg}(0)=0$, that is,
\[
\int_{-\pi}^\pi f\left(e^{i\theta}\right)g\left(e^{i\theta}\right)\, d\theta = 0 ,
\]
which contradicts \eqref{eq:main-1}.
\qed
\subsection*{Acknowledgment}
We would like to thank the referee for the useful remarks. 

\end{document}